\newtheorem{thm}{Theorem}[section]
\newtheorem{lem}[thm]{Lemma}
\theoremstyle{definition}
\theoremstyle{remark}
\numberwithin{equation}{section}
\newcommand{\N}{\mathbb{N}}
\newcommand{\C}{\mathbb{C}}
\begin{document}

\author[an Huef]{Astrid an Huef}
\author[Raeburn]{Iain Raeburn}
\author[Tolich]{Ilija Tolich}
\address{Department of Mathematics and Statistics, University of Otago, PO Box 56, Dunedin 9054, New Zealand.}
\email{\{astrid, iraeburn\}@maths.otago.ac.nz, ilija.tolich@gmail.com}
\title[Star-commuting power partial isometries]{\boldmath Structure theorems for star-commuting\\ power partial isometries}

\subjclass[2000]{47A45}

\keywords{Power partial isometry; star-commuting families; tensor-product decomposition}

\begin{abstract}
We give a new formulation and proof of a theorem of Halmos and Wallen on the structure of power partial isometries on Hilbert space. We then use this theorem to give a structure theorem for a finite set of partial isometries which star-commute: each operator commutes with the others and with their adjoints.
\end{abstract}
\date{\today}
\maketitle

\section{Introduction}

The Wold-von Neumann theorem says that every isometry on a Hilbert space is a direct sum of a unitary operator and unilateral shifts. Halmos and Wallen \cite{HW} proved a similar result for \emph{power partial isometries}: operators such that all positive powers are partial isometries. Their theorem says that every power partial isometry is a direct sum of a unitary operator, some unilateral (forward) shifts, some backward shifts and some truncated shifts on finite-dimensional spaces.

There has been recurring interest in analogues of the Wold-von Neumann theorem for families of commuting isometries \cite{Su, Sl, B, P}. It has been known for many years that the most satisfactory results are those for families which star-commute, in the sense that each isometry commutes with the other isometries and with their adjoints (see \cite{Sl, BKS, Sa}, and the extensive references in \cite{Sa}). There have been similar results for pairs of star-commuting power partial isometries based on the Halmos-Wallen theorem \cite{CS,B2}.

Here we give a modern formulation of the Halmos-Wallen theorem in terms of tensor products, and use it to prove a structure theorem for finite families of star-commuting power partial isometries. This last result seems to be new, perhaps even for operators on a finite-dimensional space, and for isometries it looks quite different from the existing versions. For pairs of power partial isometries, it also looks quite different from the decomposition in \cite{B2}, and the tensor-product decompositions obtained in \cite[\S3]{CS}, which are for special cases where the individual Halmos-Wallen decompositions have a single summand, follow from our result.

\section{The Halmos-Wallen theorem}

We use the basic properties of partial isometries, as discussed in \cite[\S A.1]{R}, for example. We also need to know that if $V$ and $W$ are partial isometries, then $VW$ is a partial isometry if and only the initial projection $V^*V$ commutes with the range projection $WW^*$ \cite[Lemma~2]{HW}. 
An operator $T$ is a \emph{power partial isometry} if $T^n$ is a partial isometry for all $n\geq 0$, and then $\{T^nT^{*n}\}\cup\{T^{*n}T^n\}$ is a commuting family of projections. (We have just established a notational convention: $T^{*n}$ means $(T^*)^n$, and we allow also $T^{*n-m}$ for $(T^*)^{n-m}$ for $n\geq m$.)

Examples of power partial isometries include unitary operators, the unilateral shift $S$ on $\ell^2$, the backward shift $S^*$, and the truncated shifts $J_p$ defined in terms of the usual basis for $\C^p$ by $J_pe_n=e_{n+1}$ for $n<p$ and $J_pe_p=0$. (Notice that $p\geq 1$, and we include $J_1=0$.) The Halmos-Wallen theorem says that every power partial isometry can be constructed from these examples.

Our models involve tensor products of Hilbert spaces and bounded operators on them. All we need for the present theorem are the relatively elementary properties covered in \cite[\S2.6]{KR} and \cite[\S2.4 and \S B.1]{TFB}, for example. (Though in the next section we use some less elementary facts about tensor products of $C^*$-algebras.)

\begin{thm}[Halmos and Wallen]\label{HWthm}
Let $T$ be a power partial isometry on a Hilbert space $H$, and let $P$ and $Q$ be the orthogonal projections on $\bigcap_{n=1}^\infty T^nH$ and $\bigcap_{n=1}^\infty T^{*n}H$ respectively. Then $PQ=QP$ and the subspaces $H_u:=PQH$, $H_s:=(1-P)QH$, $H_b:=(1-Q)PH$ and 
\[
H_p:=\sum_{n=1}^p(T^{n-1}T^{*n-1}-T^{n}T^{*n})(T^{*p-n}T^{p-n}-T^{*p-n+1}T^{p-n+1})H
\] are all reducing for $T$, and satisfy $H=H_u\oplus H_s\oplus H_b\oplus\big(\bigoplus_{p=1}^\infty H_p\big)$. Further there are Hilbert spaces $M_s$, $M_b$ and $\{M_p:p\geq 1\}$ (allowing $M_*=\{0\}$) such that
\begin{enumerate}
  \item\label{1a} $T|_{H_u}$ is unitary;
  \item\label{1b} $T|_{H_s}$ is unitarily equivalent to $S\otimes 1$ on $\ell^2(\N)\otimes M_s$;
  \item\label{1c} $T|_{H_b}$ is unitarily equivalent to $S^*\otimes 1$ on $\ell^2(\N)\otimes M_b$;
  \item\label{1d} for $p\geq 1$, $T|_{H_p}$ is unitarily equivalent to $J_p\otimes 1$ on $\C^p\otimes M_p$.
\end{enumerate}
\end{thm}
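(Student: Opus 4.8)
The plan is to do everything inside the abelian von Neumann algebra generated by the two decreasing sequences of range and domain projections, and to drive the whole argument from two intertwining relations. Write $P_n:=T^nT^{*n}$ and $Q_n:=T^{*n}T^n$, with $P_0=Q_0=1$; conjugating the inequalities $TT^*\le1$ and $T^*T\le1$ gives $P_{n+1}=T^n(TT^*)T^{*n}\le P_n$ and $Q_{n+1}\le Q_n$, so each $P_n$ is the projection onto the closed subspace $T^nH$ and the sequences decrease to the projections $P$ and $Q$ of the statement. First I would record, from $TT^*T=T$ (so that $TQ_1=T=P_1T$) and the commutativity of $\{P_n\}\cup\{Q_n\}$, the identities $TP_nT^*=P_{n+1}$ and $T^*Q_nT=Q_{n+1}$, and hence their streamlined forms
\[
TP_n=P_{n+1}T,\qquad TQ_m=Q_{m-1}T\quad(m\ge1).
\]
Setting $R_n:=P_{n-1}-P_n$ and $L_m:=Q_{m-1}-Q_m$ for the gaps in the two filtrations, these become $TR_n=R_{n+1}T$, $TL_m=L_{m-1}T$ (with $TL_1=0$), together with the adjoint relations $T^*R_n=R_{n-1}T^*$ (with $T^*R_1=L_1T^*$) and $T^*L_m=L_{m+1}T^*$.

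Next I would assemble the orthogonal decomposition. Since $P$, $Q$, and all $R_n$, $L_m$ lie in the abelian algebra generated by $\{P_n\}\cup\{Q_n\}$, they commute, so $PQ=QP$; letting $n,m\to\infty$ in the intertwining relations gives $TP=PT$ and $TQ=QT$, whence the four projections $PQ$, $(1-P)Q$, $(1-Q)P$, $(1-P)(1-Q)$ commute with $T$ and $T^*$ and the corresponding subspaces reduce $T$. Expanding the identity $1=\big(\sum_nR_n+P\big)\big(\sum_mL_m+Q\big)$ and collecting cross terms sorts $H$ into the four regimes $PQ$, $R_nQ$, $PL_m$ and $R_nL_m$, which assemble to $H_u$, $H_s$, $H_b$ and $(1-P)(1-Q)H$ respectively. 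Because $TR_nL_m=R_{n+1}L_{m-1}T$ preserves the index sum $n+m$, and the same holds for $T^*$, the summands $R_nL_m$ with $n+m=p+1$ span a reducing subspace, which is exactly $H_p$; hence $(1-P)(1-Q)H=\bigoplus_pH_p$.

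For the four models I would present each non-unitary summand as generated by a wandering subspace. On $H_u=PQH$ both $Q\le Q_1$ and $P\le P_1$ force $T^*T=Q_1$ and $TT^*=P_1$ to act as the identity, so $T|_{H_u}$ is unitary. For the forward-shift part set $M_s:=R_1QH=\ker(T^*)\cap H_s$; since $Q\le Q_1$ the operator $T$ is isometric on $H_s$, and iterating $T^nR_1=R_{n+1}T^n$ with $\operatorname{ran}T^n=P_nH$ and $R_{n+1}\le P_n$ yields $T^nM_s=R_{n+1}QH$, so the mutually orthogonal pieces $T^nM_s$ exhaust $H_s$ and $e_n\otimes\eta\mapsto T^n\eta$ is a unitary $\ell^2(\N)\otimes M_s\to H_s$ intertwining $S\otimes1$ with $T$. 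The part $H_b$ is treated identically with $T$ replaced by $T^*$ and $M_b:=PL_1H=\ker(T)\cap H_b$. For $H_p$ take $M_p:=R_1L_pH=\ker(T^*)\cap H_p$; the same computation gives $T^{k-1}M_p=R_kL_{p+1-k}H$ for $1\le k\le p$ and $T^pM_p=0$ (the final step dies because $TL_1=0$), so $e_k\otimes\eta\mapsto T^{k-1}\eta$ is a unitary $\C^p\otimes M_p\to H_p$ intertwining $J_p\otimes1$ with $T$.

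I expect the main obstacle to be the two-filtration bookkeeping rather than any single estimate. One must verify that $T$ (or $T^*$) really is isometric on each summand up until the step where it is annihilated, which is exactly where $Q\le Q_1$, $P\le P_1$ and $L_m\le Q_1$ (for $m\ge2$) are used, and — more delicately — that the powers of $T$ carry each wandering subspace \emph{onto}, not merely into, the intended $R_nL_m$ summand. This surjectivity is the crux: it rests on $\operatorname{ran}T^n=P_nH$ together with $R_{n+1}\le P_n$, and for the mixed pieces $R_nL_m$ it requires tracking both indices at once. Once these are in hand, the claimed unitary equivalences fall out directly from the intertwining relations.
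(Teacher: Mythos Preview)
Your proposal is correct and follows essentially the same route as the paper: both arguments are driven by the intertwining relations $TP_n=P_{n+1}T$ and $TQ_m=Q_{m-1}T$, both introduce the gap projections $P_{n-1}-P_n$ and $Q_{m-1}-Q_m$, telescope to obtain the orthogonal decomposition, and then identify a wandering subspace in each summand and verify that powers of $T$ map it isometrically onto the successive pieces. Your derivation of $TP=PT$ and $TQ=QT$ by passing to the strong limit in the intertwining relations is slightly slicker than the paper's separate lemma establishing that $PH$ is reducing, and your uniform treatment of $H_p$ via the index-preserving relation $TR_nL_m=R_{n+1}L_{m-1}T$ is tidier than the paper's stand-alone lemma, but these are cosmetic differences rather than a genuinely different strategy.
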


This formulation of the Halmos-Wallen result was previously made informally in the proof of \cite[Theorem~1.3]{HR}. At the time, it apparently did not merit proof, though the intention in \cite{HR} was to deduce it from the original version of \cite{HW}. Here we give a direct proof, which will occupy the rest of the section. So throughout the section, $T$ is a power partial isometry.

The \emph{multiplicity spaces} $M_*$ are only unique up to isomorphism, and hence are determined by their dimension. But in our proof it is convenient to take them to be the subspaces
\begin{align*}
M_s&=(1-TT^*)QH,\\
M_b&=(1-T^*T)PH,\\
M_p&=(1-TT^*)(T^{*p-1}T^{p-1}-T^{*p}T^p)H\text{ for $p\geq 2$, and}\\
M_1&=(1-TT^*)(1-T^*T)H=\ker T\cap \ker T^*.
\end{align*}
To see that $(1-TT^*)(1-T^*T)H=\ker T\cap \ker T^*$, note that
\[
Th=0=T^*h\Longleftrightarrow T^*Th=0=TT^*h\Longleftrightarrow h=(1-TT^*)(1-T^*T)h.
\]

We begin by looking at some properties of the projections $P$ and $Q$. The projections $T^nT^{*n}$ onto the subspaces $T^nH$ form a decreasing sequence, and hence converge in the strong-operator topology to the projection $P$ onto $\bigcap_{n=1}^\infty T^nH$ (by \cite[Corollary~2.5.7]{KR}, for example). For similar reasons we also have $T^{*n}T^n\to Q$ in the strong-operator topology. Since composition is jointly strong-operator continuous on norm-bounded sets \cite[Remark~2.5.10]{KR}, we have $(T^nT^{*n})(T^{*n}T^n)\to PQ$ in the strong-operator topology. Since the range and source projections all commute with each other, this implies in particular that $PQ=QP$. Thus all the products $PQ$, $(1-P)Q$ etc. are projections, and all the subspaces $H_*$ are closed.

Next we tackle the assertion that the subspaces are reducing for $T$.

\begin{lem}
The subspace $PH=\bigcap_{n=1}^\infty T^{n}H$ is reducing for $T$ and $T|_{PH}$ is a co-isometry.
\end{lem}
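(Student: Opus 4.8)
The plan is to treat the two assertions separately: that $PH$ reduces $T$ is the substance of the lemma, while the co-isometry property will be almost immediate. Throughout I write $R_n:=T^nT^{*n}$ for the range projection of $T^n$, so that $R_n\to P$ in the strong-operator topology, as already noted. The key will be an exact intertwining identity relating $T^*$ to the $R_n$.

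First I would prove that $P$ commutes with $T$ by establishing
\[
R_nT^*=T^*R_{n+1}\qquad(n\geq 1).
\]
Expanding both sides gives $R_nT^*=T^nT^{*(n+1)}$ and $T^*R_{n+1}=(T^*T)T^nT^{*(n+1)}$, so the identity is equivalent to $(1-T^*T)T^nT^{*(n+1)}=0$. Rewriting $T^nT^{*(n+1)}=R_nT^*$ and using that the source projection $T^*T$ lies in the same commuting family of projections as $R_n$, this equals $R_n(1-T^*T)T^*$; and $(1-T^*T)T^*=T^*-T^*TT^*=0$ by the partial-isometry identity $T^*TT^*=T^*$. Letting $n\to\infty$ and using joint strong-operator continuity of multiplication on bounded sets (so that $R_nT^*\to PT^*$ and $T^*R_{n+1}\to T^*P$), I conclude $PT^*=T^*P$. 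Hence $P$ commutes with $T$ and $PH$ is reducing for $T$.

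For the co-isometry property, I would note that $PH=\bigcap_{n}T^nH\subseteq TH$, and since $TT^*$ is the range projection of $T$ we have $TT^*h=h$ for every $h\in PH$. Because $PH$ reduces $T$, the adjoint of $T|_{PH}$ is $T^*|_{PH}$, and the relation $TT^*|_{PH}=1$ says precisely that $T|_{PH}$ is a co-isometry.

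The main obstacle is isolating the intertwining identity. Invariance of $PH$ under $T$ is transparent from $T^{n+1}H\subseteq T^nH$, but invariance under $T^*$ is not visible from the description $PH=\bigcap_nT^nH$; it is exactly here that one must combine the commutativity of the projection family with the defining relations of a partial isometry. Once the identity is in hand, passing to the limit and reading off the co-isometry are routine.
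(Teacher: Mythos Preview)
Your proof is correct and follows essentially the same route as the paper: both arguments rest on showing that $T^*$ carries $T^{n+1}H$ into $T^nH$ via the commutativity of $T^*T$ with $T^nT^{*n}$ and the partial-isometry identity $T^*TT^*=T^*$. The paper verifies this elementwise (writing $h=T^{n+1}k$ and computing $T^*h\in T^nH$), whereas you package the same computation as the operator intertwining $R_nT^*=T^*R_{n+1}$ and then pass to the strong-operator limit to obtain $PT^*=T^*P$; the co-isometry part is identical.
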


\begin{proof}
Since $T:T^nH\to T^{n+1}H$, $PH$ is invariant for $T$. To see it is invariant for $T^*$, take $h\in PH$, and $n\geq 1$. Since $h\in T^{n+1}H$, we can write $h=T^{n+1}k$. Then
\[
T^*h=T^*T^{n+1}k=T^*TT^nk=T^*TT^nT^{*n}T^nk=(T^nT^{*n})(T^*T)T^nk
\]
belongs to $T^nH$. So $PH$ is reducing. To see that $T^*|_{PH}=(T|_{PH})^*$ is an isometry, note that $TT^*T^n=TT^*TT^{n-1}=TT^{n-1}=T^n$, so $TT^*$ is the identity on $T^nH$ for all $n\geq 1$.
\end{proof}

Applying this lemma to the power partial isometry $T^*$ shows that $QH$ is reducing for $T$ and that $T|_{QH}$ is an isometry. Thus the complements $(1-P)H=(PH)^\perp$ and $(1-Q)H$ are reducing, and so are the intersections $PQH=PH\cap QH$, $(1-P)QH$, $(1-Q)PH$ and $(1-P)(1-Q)H$. Now we see that both $T$ and $T^*$ are isometric on $H_u=PQH$, and hence $T|_{H_u}$ is unitary, as claimed in part~\eqref{1a}.

For the other three parts, we recall some useful identities. First, we compute
\begin{equation}\label{pull1}
T(T^nT^{*n})=(TT^*T)T^nT^{*n}=T(T^nT^{*n})(T^*T)=(T^{n+1}T^{*n+1})T,
\end{equation}
and similarly
\begin{equation}\label{pull2}
T(T^{*n}T^n)=TT^*(T^{*n-1}T^{n-1})T=(T^{*n-1}T^{n-1})(TT^*)T=(T^{*n-1}T^{n-1})T.
\end{equation}

\begin{proof}[Proof of Theorem~\ref{HWthm}\eqref{1b}]
From \eqref{pull1} and its adjoint we get
\begin{align*}\label{commute1}
T^n(1-TT^*)&=(T^nT^{*n}-T^{n+1}T^{*n+1})T^n,\quad\text{and}\\
T^{*n}(T^mT^{*m}-T^{m+1}T^{*m+1})
&=\begin{cases}
T^{*n}-T^{*n}=0&\text{if $n>m$}\\
(T^{m-n}T^{*m-n}-T^{m-n+1}T^{*m-n+1})T^{*n}&\text{if $n\leq m$.}
\end{cases}
\end{align*}
Thus with $P_n:=T^nT^{*n}-T^{n+1}T^{*n+1}$, we have $P_mP_n=\delta_{m,n}P_m$. For $h\in H$, the series $\sum_{n=0}^\infty P_nh$ telescopes and converges in norm with sum $(1-P)h$, and hence $H_s=(1-P)QH$ has a direct sum decomposition $H_s=\bigoplus_{n=0}^\infty P_nQH$.

Since $T|_{QH}$ is an isometry, we have
\[
T^{*n}T^n(1-TT^*)Q=(1-TT^*)T^{*n}T^nQ=(1-TT^*)Q.
\]
Since $T^nT^{*n}P_nQ=P_nQ$, we deduce that $T^n$ is an isomorphism of $M_s=P_0QH$ onto $P_nQH$ for all $n\geq 1$. Thus there is a unitary isomorphism $U$ of $\ell^2\otimes M_s$ onto $\bigoplus_{n=0}^\infty P_nQH=H_s$ such that $U(e_n\otimes h)=T^nh$ for $n\geq 0$ and $h\in M_s$. This isomorphism satisfies
\[
U(S\otimes 1)(e_n\otimes h)=U(e_{n+1}\otimes h)=T^{n+1}h=TU(e_n\otimes h),
\]
and hence $U(S\otimes 1)U^*=T|_{H_s}$.
\end{proof}

For part~\eqref{1c} we apply part \eqref{1b} to the power partial isometry $T^*$, and then take adjoints. It remains to prove part~\eqref{1d}. For later use, we prove a little more than we need:

\begin{lem}\label{Hp}
For $p\geq 2$, we have
\begin{equation}\label{defHp}
H_p=\bigoplus_{n=1}^p(T^{n-1}T^{*n-1}-T^{n}T^{*n})(T^{*p-n}T^{p-n}-T^{*p-n+1}T^{p-n+1})H,
\end{equation}
viewed as an internal direct sum inside $H$. Each $H_p$ is reducing for $T$, and there is a unitary isomorphism $U_p$ of $\C^p\otimes M_p$ onto $H_p$ such that $T|_{H_p}=U(J_p\otimes 1)U^*$.
\end{lem}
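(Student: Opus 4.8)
The plan is to rewrite everything in terms of the range projections $P_n=T^nT^{*n}-T^{n+1}T^{*n+1}$ introduced in the proof of part~\eqref{1b} together with their source-side analogues $Q_n=T^{*n}T^n-T^{*n+1}T^{n+1}$. The $n$-th summand of $H_p$ is then exactly $P_{n-1}Q_{p-n}H$, so the summands are the spaces $P_jQ_kH$ with $j+k=p-1$. Since the range and source projections all commute, each $P_jQ_k$ is a projection, and the relation $P_jP_{j'}=\delta_{j,j'}P_j$ (already noted) shows that distinct summands are orthogonal; this gives the internal direct sum \eqref{defHp}. I would also record that $M_p=(1-TT^*)(T^{*p-1}T^{p-1}-T^{*p}T^p)H=P_0Q_{p-1}H$ is precisely the bottom ($n=1$) summand.

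Next I would extract from \eqref{pull1} and \eqref{pull2} the pull-through rules $TP_n=P_{n+1}T$ and $TQ_n=Q_{n-1}T$ (the latter for $n\geq1$), together with their adjoints $T^*P_n=P_{n-1}T^*$ ($n\geq1$) and $T^*Q_n=Q_{n+1}T^*$. Applied to a summand these give $TP_jQ_k=P_{j+1}Q_{k-1}T$ for $k\geq1$ and $T^*P_jQ_k=P_{j-1}Q_{k+1}T^*$ for $j\geq1$, both of which preserve the value $j+k=p-1$; hence $T$ carries $P_jQ_kH$ into $P_{j+1}Q_{k-1}H$ and $T^*$ carries it into $P_{j-1}Q_{k+1}H$. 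At the two ends I would check the boundary behaviour directly: since $TQ_0=T(1-T^*T)=0$ and $T^*P_0=T^*(1-TT^*)=0$, the top summand $P_{p-1}Q_0H$ is annihilated by $T$ and the bottom summand $P_0Q_{p-1}H=M_p$ is annihilated by $T^*$. In particular $H_p$ is reducing for $T$, and $T$ shifts the summands up the chain $M_p\to P_1Q_{p-2}H\to\cdots\to P_{p-1}Q_0H\to 0$ exactly as $J_p$ shifts the basis of $\C^p$.

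It remains to identify each summand with a copy of $M_p$. Because the source projections $T^{*n}T^n$ are decreasing, every $h\in M_p$ satisfies $T^{*n}T^nh=h$ for $n\leq p-1$, so each $T^n$ is isometric on $M_p$; dually $T^nT^{*n}$ is the identity on the range of $P_n$, and hence on $P_nQ_{p-1-n}H$. Combining these with the pull-through rules shows that $T^n$ restricts to a unitary of $M_p$ onto $P_nQ_{p-1-n}H$ with inverse $T^{*n}$, so that $H_p=\bigoplus_{n=0}^{p-1}T^nM_p$. I would then define $U_p\colon\C^p\otimes M_p\to H_p$ by $U_p(e_n\otimes h)=T^{n-1}h$ for $1\leq n\leq p$ and $h\in M_p$; orthogonality of the summands and the isometry of each $T^{n-1}$ on $M_p$ make $U_p$ unitary, and the shift calculation $U_p(J_p\otimes 1)(e_n\otimes h)=T^nh=TU_p(e_n\otimes h)$ (with both sides zero when $n=p$) gives $T|_{H_p}=U_p(J_p\otimes 1)U_p^*$.

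The computations are all routine once the two commutation rules are in hand; the only place demanding care is the bookkeeping at the two ends of the chain, where I must confirm that $T$ and $T^*$ annihilate exactly the top and bottom summands, so that the restriction really reproduces the truncated shift $J_p$ rather than a forward or backward shift. I expect this boundary check, together with the verification that $T^n$ is simultaneously isometric on $M_p$ and surjective onto the correct summand, to be the main (though modest) obstacle.
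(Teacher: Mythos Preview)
Your proposal is correct and follows essentially the same route as the paper: you factor each summand as $P_jQ_k$ with $j+k=p-1$ (the paper writes the product as a single projection $P_n$), use the pull-through identities \eqref{pull1}--\eqref{pull2} to get orthogonality and to show $T^{n-1}$ is a unitary of $M_p$ onto the $n$-th summand, and then define $U_p(e_n\otimes h)=T^{n-1}h$ and verify the intertwining, with the boundary check $TQ_0=0$ playing the role of the paper's $T^pP_1=0$.
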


\begin{proof}
We set $P_1:=(1-TT^*)(T^{*p-1}T^{p-1}-T^{*p}T^p)H$, so that $M_p=P_1H$. Several applications of \eqref{pull1} and \eqref{pull2} show that for $n<p$, we have
\[
T^{n-1}P_1=(T^{n-1}T^{*n-1}-T^{n}T^{*n})(T^{*p-n}T^{p-n}-T^{*p-n+1}T^{p-n+1})T^{n-1}.
\]
We set
\[
P_n:=(T^{n-1}T^{*n-1}-T^{n}T^{*n})(T^{*p-n}T^{p-n}-T^{*p-n+1}T^{p-n+1})\quad\text{for $1\leq n\leq p$,}
\]
and then we have $P_mP_n=\delta_{m,n}P_m$ and $T^{n-1}P_1=P_nT^{n-1}$ for $1\leq n<p$. Taking adjoints gives $T^{*n-1}P_n=P_1T^{*n-1}$. Since $T^p(T^{*p-1}T^{p-1}-T^{*p}T^p)=T^p-T^p=0$, we also have $T^pP_1=0$. Thus $H_p=\sum_{n=1}^p P_nH$ is a reducing subspace for $T$. Since 
\[
T^{*n-1}T^{n-1}(T^{*n-1}T^{n-1}-T^{*n}T^n)=(T^{*n-1}T^{n-1}-T^{*n}T^n),
\]
we have $T^{*n-1}T^{n-1}P_1=P_1$. A similar calculation shows that $T^{n-1}T^{*n-1}P_n=P_n$, and hence $T^{n-1}$ is an isomorphism of $M_p=P_1H$ onto $P_nH$.

Since the $P_n$ are mutually orthogonal projections there is a unitary isomorphism $U_p$ of $\C^p\otimes M_p$ onto $H_p$ such that $U_p(e_n\otimes h)=T^{n-1}h$. Then for $n<p$, we have
\[
U_p(J_p\otimes 1)(e_n\otimes h)=U_p(e_{n+1}\otimes h)=T^nh=TU_p(e_n\otimes h),
\]
and for $n=p$ we have $U(J_p\otimes 1)(e_n\otimes h)=0$ and $TU(e_p\otimes h)=TT^{p-1}h=T^p(P_1h)=0$. Thus $U_p$ intertwines $J_p\otimes 1$ and $T|_{H_p}$, as required.
\end{proof}

\begin{proof}[End of the proof of Theorem~\ref{HWthm}]
We consider the projections
\[
Q_{m,n}:=(T^{m}T^{*m}-T^{m+1}T^{*m+1})(T^{*n}T^{n}-T^{*n+1}T^{n+1}).
\]
These projections are mutually orthogonal. The partial sums $\sum_{m=0}^M\sum_{n=0}^N Q_{m,n}$ telescope, and hence
\[
\sum_{m=0}^M\sum_{n=0}^NQ_{m,n}=(1-T^{M+1}T^{*M+1})(1-T^{*N+1}T^{N+1})
\]
converges in the strong-operator topology to $(1-P)(1-Q)$ as $M\to \infty$ and $N\to \infty$. Thus we have a direct-sum decomposition $(1-P)(1-Q)H=\bigoplus_{m,n=0}^\infty Q_{m,n}H$. The subspaces $H_p$ of Lemma~\ref{Hp} are finite direct sums of the $Q_{m,n}H$, and every $Q_{m,n}H$ is a summand of some $H_p$ --- in fact, of $H_{m+n+1}$. Thus $(1-P)(1-Q)H=\bigoplus_{p=1}^\infty H_p$. 

This completes the proof of Theorem~\ref{HWthm}.
\end{proof}

\section{Star-commuting power partial isometries}

We now consider a finite family $\{T_m:1\leq m\leq M\}$ of power partial isometries, and we assume that they \emph{star-commute}: we have $T_mT_n=T_nT_m$ for all $m,n$ and $T^*_mT_n=T_nT^*_m$ for $m\not=n$. Such families have also been described as \emph{doubly commuting}. We say that two such families $\{T_m\}\subset B(H)$ and $\{S_m\}\subset B(K)$ are \emph{simultaneously unitarily equivalent} if there is a unitary isomorphism $U$ of $H$ onto $K$ such that $UT_mU^*=S_m$ for all~$m$.

\begin{thm}\label{finitely many star-commuting decomposition}
Let $\{T_m:1\leq m\leq M\}$ be star-commuting power partial isometries on a Hilbert space $H$, and set $I=\{u,s,b\}\cup \{p\in\N:p\geq 1\}$. For each multiindex $i\in I^M$, we write $\Sigma_i:=\{m:1\leq m\leq M,\;i_m\not= u\}$, and we set $K_{i,m}=\ell^2$ if $i_m=s$ or $i_m=b$, and $K_{i,m}=\C^p$ if $i_m=p$. Then there are closed subspaces $\{H_i:i\in I^M\}$ of $H$, all of which are reducing for all the $T_m$ and which satisfy $H=\bigoplus_{i\in I^M}H_i$, Hilbert spaces $\{M_i:i\in I^M\}$, and commuting unitaries $\{V_{i,m}\in U(M_i):i_m=u\}$ such that the $\{T_m|_{H_i}:1\leq m\leq M\}$ are simultaneously unitarily equivalent to
\begin{enumerate}
\item $\big(\bigotimes_{n\in \Sigma_i}1_{K_{i,n}}\big)\otimes V_{i,m}$ if $i_m=u$;
\item $\big(\bigotimes_{n\in \Sigma_i, n\not= m}1_{K_{i,n}}\big)\otimes S\otimes 1_{M_i}$ if $i_m=s$;
\item $\big(\bigotimes_{n\in \Sigma_i, n\not= m}1_{K_{i,n}}\big)\otimes S^*\otimes 1_{M_i}$ if $i_m=b$;
\item $\big(\bigotimes_{n\in \Sigma_i, n\not= m}1_{K_{i,n}}\big)\otimes J_p\otimes 1_{M_i}$ if $i_m=p$.
\end{enumerate}
\end{thm}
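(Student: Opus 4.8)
The plan is to induct on $M$, peeling off one tensor factor at each stage by applying Theorem~\ref{HWthm} to a single operator. The base case $M=1$ is Theorem~\ref{HWthm} itself, with $V_{i,1}:=T_1|_{H_u}$ in the unitary case. For the inductive step, suppose the result holds for star-commuting families of size $M-1$, and apply Theorem~\ref{HWthm} to $T_1$ alone to obtain the reducing subspaces $H_u,H_s,H_b,\{H_p:p\ge 1\}$ together with the concrete multiplicity spaces $M_u:=H_u$, $M_s$, $M_b$, $\{M_p\}$ exhibited after the statement of Theorem~\ref{HWthm}.

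The first point to establish is that all of these subspaces are reducing for every $T_m$. Since the family star-commutes, both $T_m$ and $T_m^*$ commute with $T_1$ and $T_1^*$ for $m\ge 2$, hence with each projection $T_1^nT_1^{*n}$ and $T_1^{*n}T_1^n$, and therefore (left and right multiplication being strong-operator continuous on bounded sets) with their strong-operator limits $P$ and $Q$ and with every projection occurring in the definitions of the $H_j$ and $M_j$. Thus each $H_j$ and each $M_j$ is reducing for all the $T_m$, and $\{T_m|_{M_j}:2\le m\le M\}$ is again a star-commuting family of power partial isometries: the restriction of a power partial isometry to a reducing subspace is one, since $T_m^n|_{M_j}(T_m^n|_{M_j})^*T_m^n|_{M_j}=(T_m^n T_m^{*n}T_m^n)|_{M_j}=T_m^n|_{M_j}$, and restriction clearly preserves the (star-)commuting relations.

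The technical heart is a transfer step: under the Halmos--Wallen isomorphism each $T_m$ with $m\ge 2$ becomes $1\otimes(T_m|_{M_j})$. On $H_s$ the isomorphism satisfies $U(e_n\otimes h)=T_1^n h$ for $h\in M_s$, so, using that $T_m$ commutes with $T_1$ and carries $M_s$ into itself, $T_mU(e_n\otimes h)=T_1^nT_mh=U(e_n\otimes T_mh)$; the same computation works on each $H_p$ via the isomorphism $U_p(e_n\otimes h)=T_1^{n-1}h$ of Lemma~\ref{Hp}, and on $H_b$ via the analogous $T_1^*$-isomorphism (using that $T_m$ commutes with $T_1^*$), while on $H_u$ the assertion is vacuous. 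Consequently, on each $H_j$ the operator $T_1$ has the model tensor form of Theorem~\ref{HWthm} acting in the first tensor slot (acting only as a unitary on $M_u$ when $j=u$), and $T_2,\dots,T_M$ act as $1$ in that slot tensored with their restrictions to $M_j$.

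I would then apply the inductive hypothesis to $\{T_m|_{M_j}:2\le m\le M\}$ on each $M_j$, obtaining a decomposition indexed by $I^{M-1}$ with the stated tensor forms, multiplicity spaces and commuting unitaries, and reassemble with $i=(j,i')$. The factor peeled from $T_1$ supplies the first slot: when $j\ne u$ the operator $T_1$ acts as $1$ on all of $M_j$, hence as $1$ on every further tensor factor produced inside $M_j$, matching forms (b)--(d); when $j=u$ the transfer step pushes the unitary $T_1|_{H_u}$ onto the innermost multiplicity space to produce $V_{i,1}$, and since $T_1$ commutes with each $T_m$ its image $V_{i,1}$ commutes with the other $V_{i,m}$ (restriction and transfer both preserve commutativity). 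The main obstacle is precisely this bookkeeping: one must verify that every unitary summand migrates all the way to the innermost multiplicity space and that the tensor-slot ordering is respected, so that the concrete models assemble into exactly the forms~(a)--(d) rather than a permuted or differently bracketed version.
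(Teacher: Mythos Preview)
Your argument is essentially sound and in one respect more elementary than the paper's. Both proofs induct on $M$, and both need to know at the inductive step that an operator $R$ star-commuting with the family factors through each summand as $1\otimes R'$. The paper obtains this by augmenting the inductive hypothesis with the claim that each $H_i$ is reducing for every such $R$, and then observing that $C^*(S)$, $C^*(S^*)$ and $C^*(J_p)$ all act irreducibly on their respective $K_{i,m}$; hence so does their spatial tensor product, and the commutant lemma for tensor products (\cite[Lemma~B.36]{TFB}) forces $R|_{H_i}=1\otimes R_i$. Your ``transfer step'' replaces this $C^*$-algebraic argument with a direct computation using the explicit Halmos--Wallen intertwiner $U(e_n\otimes h)=T_1^{\,n} h$: this is cleaner and avoids any appeal to irreducibility or tensor-product commutants.

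There is, however, a real gap in the way you have set up the induction, and it is more than the ``bookkeeping'' you flag. Your inductive hypothesis is the bare statement of the theorem, which only asserts the existence of \emph{some} unitary implementing the tensor model on each $M_j$; it says nothing about how an additional star-commuting operator --- here the unitary $T_1|_{H_u}$ --- transforms under that unitary. Your transfer step was proved only for the single Halmos--Wallen decomposition of $T_1$, and you cannot invoke it inside the $(M-1)$-fold decomposition provided by the inductive hypothesis unless you know that decomposition was built from the same explicit intertwiners. So the sentence ``the transfer step pushes the unitary $T_1|_{H_u}$ onto the innermost multiplicity space'' does not follow from what you have assumed. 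The fix is precisely what the paper does: strengthen the inductive hypothesis so that it tracks the behaviour of every star-commuting operator. You can either adopt the paper's augmentation (subspaces reducing for all star-commuting $R$) and then supply a $1\otimes R$ argument, or --- more in keeping with your elementary approach --- augment the hypothesis to say directly that each star-commuting $R$ becomes $\big(\bigotimes 1\big)\otimes R_i$ on $H_i$, and use your transfer computation to carry this through the inductive step.
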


\begin{proof}
We prove by induction on $M$ that the theorem holds for $M$-tuples of power partial isometries, augmented to say that all the subspaces $H_i$ are reducing for every operator $R$ that star-commutes with all the $T_m$. For $M=1$, the sets $i$ are singletons, and the subspaces $H_i$ are the subspaces $H_u$, $H_s$, $H_b$ and $H_p$ in Theorem~\ref{HWthm}. Now suppose that $R$ star-commutes with $T$. Since the projections $P$ and $Q$ in Theorem~\ref{HWthm} are strong-operator limits of the sequences $\{T^nT^{*n}\}$ and $\{T^{*n}T^n\}$, respectively, we deduce that $P$ and $Q$ star-commute with $R$. The formula~\eqref{defHp} shows that the projection onto $H_p$ is also built from range and source projections of the $T^n$, so it star-commutes with $R$ too. Thus the ranges of these projections are reducing for $R$.

So we suppose the augmented theorem is true for $M$, and that we have star-commuting partial isometries $\{T_m:1\leq m\leq M+1\}$. To simplify the formulas and reduce the number of cases, we write $S_s:=S$, $S_b:=S^*$ and $S_p:=J_p$ for $p\geq 1$. We apply the inductive hypothesis to $\{T_m:1\leq m\leq M\}$. Since we are working up to simultaneous unitary equivalence, we can conjugate by a unitary and suppose that there exist spaces $N_i$ with
\begin{align*}
H&={\textstyle\bigoplus_{i\in I^M}}H_i={\textstyle\bigoplus_{i\in I^M}}\big(\textstyle{\bigotimes_{n\in \Sigma_i}}K_{i,n}\big)\otimes N_i,
\intertext{and that for every $1\leq m\leq M$}
T_m&=\begin{cases}
{\textstyle\bigoplus_{i\in I^M}}\big(\textstyle{\bigotimes_{n\in \Sigma_i}}1_{K_{i,n}}\big)\otimes V_{i,m}
&\text{if $i_m=u$}\\
{\textstyle\bigoplus_{i\in I^M}}\big(\textstyle{\bigotimes_{n\in \Sigma_i,n\not= m}}1_{K_{i,n}}\big)\otimes S_{i_m}\otimes 1_{N_i}&\text{if $i_m\not= u$}.
\end{cases}
\end{align*}
The partial isometry $T_{M+1}$ star-commutes with all the other $T_m$, and hence by the augmentation in the induction hypothesis leaves all the summands $H_i$ invariant. Thus $T_{M+1}|_{H_i}$ star-commutes with all the operators $\big(\textstyle{\bigotimes_{n\in \Sigma_i,n\not= m}}1_{K_{i,n}}\big)\otimes S_{i_m}\otimes 1_{N_i}$ arising as summands of the $T_m|_{H_i}$. Hence it star-commutes with all the operators of the form $T\otimes 1$ for which $T$ is in the $C^*$-subalgebra of $B\big(\textstyle{\bigotimes_{j\in \Sigma_i}}K_{i,j}\big)$ generated by the operators $\big(\textstyle{\bigotimes_{n\in \Sigma_i,n\not= m}}1_{K_{i,n}}\big)\otimes S_{i_m}$.

For $i_m=p\geq 1$, the $C^*$-algebra $C^*(S_{i_m})=C^*(J_p)$ is all of $M_p(\C)$, and for $i_m=s$ or $i_m=b$, $C^*(S_{i_m})=C^*(S)$ contains the algebra of compact operators on $\ell^2$. Hence for all $m\in \Sigma_i$, the algebra $C^*(S_{i_m})$ acts irreducibly on $K_{i,m}$. Thus the spatial tensor product $\bigotimes_{m\in \Sigma_i} C^*(S_{i_m})$ acts irreducibly on $\bigotimes_{m\in \Sigma_i} K_{i,m}$, and the operator $T_{M+1}|_{H_i}$ has the form $1\otimes R_i$ for some $R_i\in B(N_i)$ (see \cite[Lemma~B.36]{TFB}, for example). Since $T_{M+1}|_{H_i}$ is a power partial isometry, so is $R_i$, and since the $T_m$ star-commute with $T_{M+1}$, the unitaries $V_{i,m}$ star-commute with $R_i$.

We now apply Theorem~\ref{HWthm} to the power partial isometry $R_i$ on $N_i$, yielding a direct sum decomposition of $N_i$. Again we can conjugate by a unitary isomorphism, and assume that
\begin{align}
\label{multiplicitydecomp}
N_i&=M_{i,u}\oplus(\ell^2\otimes M_{(i,s)})\oplus(\ell^2\otimes M_{(i,b)})\oplus \big(\textstyle{\bigoplus_{p=1}^\infty}(\C^p\otimes M_{(i,p)})\big),\quad\text{and }\\
R_i&=U_{i,u}\oplus(S\otimes 1_{M_{(i,s)}})\oplus(S^*\otimes 1_{M_{(i,b)}})\oplus \big(\textstyle{\bigoplus_{p=1}^\infty}(J_p\otimes 1_{M_{(i,p)}})\big)\notag
\end{align}
with $U_{i,u}=R_i|_{M_{(i,u)}}$ unitary. Now for $i'=(i,i_{M+1})\in I^M\times I=I^{M+1}$, we take
\begin{align}
\label{newbits}H_{i'}&=
\begin{cases}
H_i=\big(\bigotimes_{m\in \Sigma_i}K_{i,m}\big)\otimes M_{(i,u)}&\text{if $i_{M+1}=u$}\\
\big(\bigotimes_{m\in \Sigma_i}K_{i,m}\big)\otimes K_{i',{M+1}}\otimes M_{(i,i_{M+1})}&\text{if $i_{M+1}\not=u$}
\end{cases}\\
&=
\begin{cases}
\big(\bigotimes_{m\in \Sigma_{i'}}K_{i',m}\big)\otimes M_{(i,u)}&\text{if $i_{M+1}=u$ (since then $\Sigma_{i'}=\Sigma_i$)}\\
\big(\bigotimes_{m\in \Sigma_{i'}}K_{i',m}\big)\otimes M_{(i,i_{M+1})}&\text{if $i_{M+1}\not=u$.}
\end{cases}\notag
\end{align}
It follows from \eqref{multiplicitydecomp} and \eqref{newbits} that $H_i=\bigoplus_{j\in I}H_{(i,j)}$ for each $i\in I^M$, so the $H_{i'}$ give a direct-sum decomposition of $H$. Since all the $V_{i,m}$ star-commute with $R_i$, the augmentation in the case $M=1$ implies that they star-commute with $U_{i,u}$ and that all the direct summands in \eqref{multiplicitydecomp} are reducing for them; we take 
$V_{i',m}$ to be $V_{i,m}$ if $i_{M+1}=u$, and if $i_{M+1}\not=u$, we take $V_{i',m}$ to be the operator on $M_{i'}=M_{(i,i_{M+1})}$ such that $V_{i,m}|_{K_{i,m}\otimes M_{(i,i_{M+1})}}=1\otimes V_{i',m}$.

We still need to check that our subspaces are reducing for every operator $T$ which star-commutes with all the $T_m$. But  the subspaces $H_i$ are reducing for any such $T$ by the inductive hypothesis, and then as before $T|_{H_i}$ has the form $1\otimes R$ for some $R\in B(N_i)$. Since $T|_{H_i}=1\otimes R$ star-commutes with $T_{M+1}|_{H_i} =1\otimes R_i$, it follows from the case $M=1$ that the subspaces in the decomposition \eqref{multiplicitydecomp} are all reducing for $R$. This proves the augmented part of our inductive hypothesis for $M+1$, and completes our proof.
\end{proof}


\begin{thebibliography}{19}
\bibitem{B} Z.~Burdak, On decomposition of pairs of commuting isometries, \emph{Ann. Polon. Math.} \textbf{84} (2004), 121--135.

\bibitem{B2} Z.~Burdak, On a decomposition for pairs of commuting contractions, \emph{Studia Math.} \textbf{181} (2007),  33--45.

\bibitem{BKS} Z.~Burdak, M.~Kosiek, and M.~S{\l}oci{\'{n}}ski, The canonical Wold decomposition of commuting isometries with finite dimensional wandering spaces, \emph{Bull. Sci. Math.} \textbf{137} (2013), 653--658.

\bibitem{CS} X.~Catepill\'{a}n and W.~Szyma\'{n}ski, A model of a family of power partial isometries, \emph{Far East J. Math. Sci.} \textbf{4} (1996), 117--124.

\bibitem{EM} I.~Erd\'elyi and F.~Miller, Decomposition theorems for partial isometries, \emph{J. Math. Anal. Appl.} \textbf{30} (1970), 665--679.


\bibitem{HW} P.R. Halmos and L.J. Wallen, Powers of partial isometries, \emph{Indiana Univ. Math. J.} \textbf{19} (1970), 657--663.

\bibitem{HR} R. Hancock and I. Raeburn, The $C^*$-algebras of some inverse semigroups, \emph{Bull. Aust. Math. Soc.} \textbf{42} (1990), 335--348.


\bibitem{KR} R.V. Kadison and J.R. Ringrose, \textit{Fundamentals of the Theory of Operator Algebras}, vol.~I, Amer. Math. Soc., Providence, 1998.

\bibitem{P} D.~Popovici, A Wold-type decomposition for commuting isometric pairs, \emph{Proc. Amer. Math. Soc.} \textbf{132} (2004), 2303--2314.


\bibitem{R} I. Raeburn, \textit{Graph Algebras}, CBMS Regional Conference  Series in Math., vol.~103, Amer. Math. Soc., Providence, 2005.

\bibitem{TFB} I. Raeburn and D.P. Williams, \textit{Morita Equivalence and Continuous-Trace {$C\sp *$}-Algebras}, Mathematical Surveys and Monographs, vol.~60, Amer. Math. Soc., Providence, 1998.

\bibitem{Sa} J.~Sarkar, Wold decomposition for doubly commuting isometries, \emph{Linear Algebra Appl.} \textbf{445} (2014), 289--301.

\bibitem{Sl} M.~S{\l}oci{\'{n}}ski, On the Wold-type decomposition of a pair of commuting isometries,  \emph{Ann. Polon. Math.} \textbf{37} (1980), 255--262.

\bibitem{Su} I.~Suciu, On the semigroups of isometries, \emph{Studia Math.} \textbf{30} (1968), 101--110.


\end{thebibliography}
\end{document}